\author{\sc Saeed Salehi}
\address{Research Institute for Fundamental Sciences (RIFS), University of Tabriz, P.O.Box~51666-16471,
Tabriz, Iran. \\
School of Mathematics, Institute for Research in Fundamental Sciences, P.O.Box~19395--5746, Tehran, Iran. \quad\!\quad\!\quad
E-mail:~root@SaeedSalehi.ir}}
\begin{document}
\renewcommand{\xqed}{\hfill\ding{113}}

\begin{paper}

\begin{abstract}
Paradoxes are interesting puzzles in philosophy and mathematics,
and they could be even more fascinating when turned into proofs and theorems.
For example,  Liar's paradox can be translated into a propositional tautology, and  Barber's paradox turns into a  first-order  tautology.
Russell's paradox, which collapsed Frege's foundational framework,
is now a classical theorem in set theory, implying that no set of all sets can exist.
Paradoxes can be used in proofs of  some other theorems;   Liar's paradox has been used  in the classical proof of  Tarski's theorem on the undefinability of truth in sufficiently rich languages. This paradox (and also Richard's paradox) appears
implicitly in  G\"{o}del's proof of his celebrated first incompleteness theorem.
In this paper, we study  Yablo's paradox  from  the viewpoint of first  and  second order logics.  We prove that  a formalization of Yablo's paradox (which is second-order in nature)  is  non-first-order-izable  in the sense of  George Boolos (1984).

\bigskip

\noindent
{\bf 2010 AMS Subject Classification}: 03B05 $\cdot$ 03B10 $\cdot$ 03C07.

\smallskip

\noindent
{\bf Keywords}: Yablo's Paradox $\cdot$ Non-first-orderizability.
\end{abstract}

\bigskip

\begin{flushright}{\small
\textsl{\textbf{This was sometime a paradox, but now the time gives it proof.}}
\\[-0.25em]
--- {\it William Shakespeare} (\textsf{\footnotesize Hamlet,  Act  3, Scene 1}).
}\\
\end{flushright}

\section{Introduction}\label{sec:intro}
If mathematicians and philosophers have come to the conclusion that some (if not almost all) of the paradoxes cannot be (re)solved, or as Priest \cite[p.~160]{priest} puts it ``the programme of solving the paradoxes is doomed to failure'', they have learned to live (and also get along) with the paradoxes; of course, as long as the paradoxes do not crumble the foundations of our logical systems. Paradoxes have proved to be more than puzzles or destructive contradictions; indeed they have been used in proofs of some fundamental mathematico-logical theorems. Let us take the most well-known, and perhaps the oldest, paradox: Liar's paradox. When translated into the language of logic, this paradox seems to claim the existence of a sentence $\lambda$ such that $\lambda\!\longleftrightarrow\!\neg\lambda$ holds. Now, Liar's paradox can turn into a propositional tautology: $\neg(\textsl{\textsf{p}}\!\longleftrightarrow\!
\neg\textsl{\textsf{p}})$.
In fact, when trying to convince oneself
%or someone else
 about the truth
 of $\neg(\textsl{\textsf{p}}\!\longleftrightarrow\!
 \neg\textsl{\textsf{p}})$,
 one can see that  the supposed argument is not that much  different from the argument of Liar's paradox. One can clearly see that the paradox becomes a (semantic) proof for that tautology; hence the title of the article (which uses the above epigram of Shakespeare).

  Let us take a second example; Russell's paradox. If there existed a set $\textsl{\textsf{r}}$ such that $\forall x (x\!\in\!\textsl{\textsf{r}}\!\longleftrightarrow\!
  x\!\not\in\!x)$, then we would have a contradiction (the sentence $\textsl{\textsf{r}}\!\in\!\textsl{\textsf{r}}
\!\longleftrightarrow\!\textsl{\textsf{r}}\!\not
\in\!\textsl{\textsf{r}}$ which results from substituting $x$ with $\textsl{\textsf{r} }$). So, the sentence  $\neg\exists y\forall x(x\!\in\!y\!\leftrightarrow\!x\!\not\in\!x)$ is a theorem in the theory of sets, whose proof is nothing more than the argument of Russell's paradox. Going deeper into the proof (or the paradox), one can see that no real set-theoretic properties of the membership relation ($\in$) is used. That is to say that for an arbitrary binary relation $\textsl{\textsf{s}}$, the sentence $\neg\exists y\forall x[\textsl{\textsf{s}}(y,x)\!\longleftrightarrow\!\neg \textsl{\textsf{s}}(x,x)]$ is a first-order logical tautology (see \cite[Exercise~12, p.~76]{dalen}). Now, if we interpret the predicate  $\textsl{\textsf{s}}(y,x)$ as ``$y$ shaves $x$'', then we get Barber's paradox (due to Russell again). More generally, for any formula $\varphi(x,y)$ with the only free variables $x$ and $y$, the sentence $\neg\exists y\forall x[\varphi(x,y)\!\longleftrightarrow\!\neg\varphi(x,x)]$ is a first-order logical tautology, whose semantic proof is very similar to the argument of Russell's or Barber's paradox. In a similar way,  $\neg \exists X^{(2)}\exists y\forall x[X^{(2)}(x,y)\!\longleftrightarrow\!\neg X^{(2)}(x,x)]$ is a second-order  tautology.

In this paper, we are mainly interested in Yablo's paradox \cite{yablo85,yablo93}\footnote{A closely related paradox is Visser's~\cite{visser} which we do not study here.}; several papers (that we do not cite all of them here) and one book \cite{cook} have been written on different aspects of this paradox. Yablo's paradox says that if there existed a sequence of sentences $\{Y_n\}_{n\in\mathbb{N}}$ with the property  that for all $n\!\in\!\mathbb{N}$, the sentence  $Y_n$ is true if and only if  $Y_k$  is untrue for every $k\!>\!n$,
then we would have a contradiction, since none of those sentences can have a truth value (the sentences $Y_n$ would be neither true nor false). This paradox is humbly called by Yablo himself, the $\omega$-Liar paradox.
The paradoxicality of the sequence $\{Y_n\}_{n\in\mathbb{N}}$ of sentences with the above property follows form the observation that if $Y_m$ is true, for some $m$, then $Y_{m+1}$, and also all $Y_k$'s, for $k\!>\!m\!+\!1$, should be untrue. So, by the falsity of $Y_{m+1}$, there should exist some $j\!>\!m\!+\!1$ such that $Y_j$ is true; a contradiction. Whence, all $Y_m$'s should be untrue, and so $Y_0$ must be true; another contradiction!

\section{Yablo's Paradox in Seconc-Order Logic}\label{sec:2nd}
For formalizing Yablo's paradox in a (first-order or second-order)  language, we first abstract away even the order relation, that appears in the paradox, and  replace  it with an arbitrary binary relation symbol $R$; see \cite{hz17} for a non-arithmetical formulation of Yablo's paradox. Let us  take $\mathcal{Y}_1$ to be the first-order scheme $$\neg\forall x\big(\varphi(x)\longleftrightarrow\forall y [xRy\rightarrow\neg\varphi(y)]\big),$$
where $\varphi(x)$ is an arbitrary first-order formula with the only free variable $x$. Here, the sentences $Y_n$ are represented by $\varphi(n)$, and the quantifiers  of the form $\forall k\!>\!n\cdots$ are represented by $\forall k (kRn\!\rightarrow\!\cdots)$.

 \begin{definition}[$\mathcal{Y}$: Yablo's Paradox in Second Order Logic]\label{def:yablo12}
{\em Let $\mathcal{Y}$ be the following second-order sentence:
$$\neg\exists X^{(1)}\forall x\big(X^{(1)}(x)\longleftrightarrow\forall y[xRy\rightarrow\neg X^{(1)}(y)]\big),$$
where $R$ is a fixed binary relation symbol.}
\hfill\ding{71}\end{definition}
Some sufficient conditions for proving ($\mathcal{Y}_1$ and) $\mathcal{Y}$ are
\begin{itemize}\itemindent=2em
\item[$(A_1)$:]  $\forall x\exists y (xRy)$, \; and \quad
$(A_2)$: $\forall x,y,z\,(xRy\wedge yRz\rightarrow xRz)$.
\end{itemize}
That is to say that  ($A_1\!\wedge\!A_2\rightarrow  \mathcal{Y}_1$ is a first-order tautology, and)   $A_1\!\wedge\!A_2\rightarrow  \mathcal{Y}$ is a second-order tautology;  see \cite{ket05,hz17}.
None of these conditions are necessary for $\mathcal{Y}$; for example in the directed graph $\langle D;R\rangle$ with $D\!=\!\{a,b,c\}$ and $R\!=\!\{(a,b),(a,c),(c,c)\}$, we have $\mathcal{Y}$ and $A_2$ but not $A_1$. Also, in the directed graph $\langle D;R\rangle$ with $D\!=\!\{a,b,c\}$ and $R\!=\!\{(a,b),(b,c),(c,c)\}$, we have $\mathcal{Y}$ and $A_1$ but not $A_2$.

As a matter of fact, some weaker conditions than $A_1\wedge A_2$ can also prove ($\mathcal{Y}_1$ and) $\mathcal{Y}$. For example, the sentence
\begin{itemize}\itemindent=2em
\item[$(A)$:] $\forall x\exists y (xRy\wedge\forall z[yRz\rightarrow xRz])$,
\end{itemize}
suffices (see Theorem~\ref{thm:thetas} below). To see that $A$ is really weaker than $A_1\wedge A_2$, consider $\langle D;R\rangle$ with $D\!=\!\{a,b,c,d\}$ and $R\!=\!\{(a,b),(b,c),(a,d),(b,d),(c,d),(d,d)\}$. Then $(D;R)$ does not satisfy $A_2$, since it is not transitive (we have $aRbRc$ but $\neg aRc$), while it satisfies $A$, since for any $x\!\in\!D$ we have $xRd\wedge\forall z[dRz\rightarrow xRz]$. Even some weaker conditions than $A$ can prove ($\mathcal{Y}_1$ and) $\mathcal{Y}$.
 \begin{definition}[Some Sufficient Conditions for Proving $\mathcal{Y}$]\label{def:theta}
{\em
Let $\theta_0(x)$ be the formula $\exists y(xRy\wedge\forall z[yRz\!\rightarrow\!xRz])$.

\noindent
For any $n\!\in\!\mathbb{N}$, if $\theta_n(x)$ is defined, then let $\theta_{n+1}(x)\!=\!\exists y(xRy\wedge\forall z[yRz\!\rightarrow\!\theta_n(z)])$.
}\hfill\ding{71}\end{definition}

We now show that   $\{\forall x\,\theta_n(x)\}_{n\in\mathbb{N}}$ is a decreasing sequence of sentences (i.e., every sentence is stronger than its successor, in the sense that the sentence implies its successor but not vice versa) that all imply ($\mathcal{Y}_1$ and) $\mathcal{Y}$. Note that $A=\forall x\,\theta_0(x)$.

\begin{theorem}[
$\forall x\,\theta_{0}(x)\not\,\dashv\vdash\cdots
\forall x\,\theta_{n}(x)\not\,\dashv\vdash\forall x\,\theta_{n+1}(x)\not\,\dashv\vdash\cdots
\not\,\dashv\vdash\mathcal{Y}$ ]
\label{thm:thetas}
{\em For every $n\!\in\!\mathbb{N}$, we have}

{\em \textup{(1)}\; $ \forall x\,\theta_n(x) \vDash \mathcal{Y}$\textup{;}
\;
\textup{(2)}\;    $ \forall x\,\theta_n(x) \vDash \forall x\, \theta_{n+1}(x)$\textup{;}
\;
\textup{(3)}\;   $ \forall x\,\theta_{n+1}(x) \nvDash \forall x\, \theta_{n}(x)$.}
\end{theorem}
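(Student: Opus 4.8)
The plan is to treat the three parts separately: (1) and (2) by induction on $n$, exploiting the recursive shape of $\theta_n$, and (3) by exhibiting for each $n$ a single finite digraph. As is standard I take all structures to be non-empty --- this matters, since over the empty domain $\mathcal{Y}$ is false while every $\forall x\,\theta_n(x)$ holds vacuously, so (1) would fail there. Fix a structure $\mathcal{M}=\langle D;R\rangle$ with $D\neq\emptyset$ and recall that $\mathcal{M}\models\mathcal{Y}$ amounts to: there is no $S\subseteq D$ with $\forall x\,[\,x\in S\leftrightarrow\forall y\,(xRy\rightarrow y\notin S)\,]$. Call such an $S$ a \emph{Yablo set} of $\mathcal{M}$.

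For (2), assume $\mathcal{M}\models\forall x\,\theta_n(x)$ and fix $x\in D$. From $\theta_n(x)$ we obtain a $y$ with $xRy$; and since $\theta_n(z)$ holds for every $z\in D$, the subformula $\forall z\,[yRz\rightarrow\theta_n(z)]$ is satisfied trivially. Hence $y$ witnesses $\theta_{n+1}(x)$, and as $x$ was arbitrary, $\mathcal{M}\models\forall x\,\theta_{n+1}(x)$.

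For (1), the crux is the following Lemma, proved by induction on $n$: \emph{if $S$ is a Yablo set of $\mathcal{M}$, then $\mathcal{M}\models\theta_n(x)$ implies $x\notin S$, for every $x\in D$.} For $n=0$: suppose $\theta_0(x)$ holds via a witness $y$ (so $xRy$ and $\forall z\,[yRz\rightarrow xRz]$) and, towards a contradiction, $x\in S$; then $y\notin S$ by the Yablo condition at $x$, so some $z$ with $yRz$ lies in $S$, whence $xRz$ by the witness property, whence $z\notin S$ again by the Yablo condition at $x$ --- contradiction, so $x\notin S$. For the step: if $\theta_{n+1}(x)$ holds via a witness $y$, the inductive hypothesis upgrades $\forall z\,[yRz\rightarrow\theta_n(z)]$ to $\forall z\,[yRz\rightarrow z\notin S]$, which by the Yablo condition at $y$ forces $y\in S$; were $x\in S$, the Yablo condition at $x$ together with $xRy$ would give $y\notin S$, a contradiction, so $x\notin S$. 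Granting the Lemma: if $\mathcal{M}\models\forall x\,\theta_n(x)$ then every Yablo set $S$ is empty, but then for any $a\in D$ the Yablo condition at $a$ demands some $y$ with $aRy$ and $y\in S=\emptyset$ --- impossible. So $\mathcal{M}$ has no Yablo set, i.e., $\mathcal{M}\models\mathcal{Y}$.

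For (3), fix $n$ and take $D=\{0,1,\dots,2n+2\}$ with $R=\{(i,i+1):0\le i\le 2n+1\}\cup\{(2n+2,2n+2)\}$ --- a simple path with a self-loop at the top vertex. Since every vertex has a unique $R$-successor, the formulas $\theta_k$ are easy to evaluate; writing $T_k=\{x\in D:\mathcal{M}\models\theta_k(x)\}$, a direct check on $\theta_0$ gives $T_0=\{2n+1,\,2n+2\}$, and an induction on $k$ (whose step again just re-evaluates the unique-successor quantifiers) gives $T_k=\{\,i\in D: i\ge 2n+1-2k\,\}$. Therefore $T_n=\{1,\dots,2n+2\}\subsetneq D$ while $T_{n+1}=D$, so this structure models $\forall x\,\theta_{n+1}(x)$ but not $\forall x\,\theta_n(x)$, which gives $\forall x\,\theta_{n+1}(x)\nvDash\forall x\,\theta_n(x)$. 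The step most in need of care is the choice of path length in (3): the ``true region'' $T_k$ expands by exactly two vertices per level, so the path must be long enough that $T_k$ first reaches the bottom vertex $0$ at level $n+1$ and not already at level $n$, which is exactly why $2n+2$ is the right value. The Lemma of (1) is the conceptual heart, but once the invariant ``satisfying $\theta_n$ keeps you out of every Yablo set'' is isolated, its induction is routine.
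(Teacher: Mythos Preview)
Your argument is correct in all three parts. Part~(2) coincides with the paper's.

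For~(1), the paper also proceeds by induction on $n$, but in the inductive step it restricts attention to the induced substructure on $D_b=\{z:bRz\}$ (where $b$ witnesses $\theta_{n+1}(a)$) and appeals to the induction hypothesis there. Your route is different: you isolate the pointwise invariant ``if $S$ is a Yablo set then $\theta_n(x)$ forces $x\notin S$'' and induct on that, never leaving the ambient structure. This is both shorter and more robust, since it sidesteps the question of whether the formulas $\theta_n$ and the Yablo biconditional relativize correctly to an induced substructure; in your argument every quantifier is evaluated in $\langle D;R\rangle$ throughout.

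For~(3), the construction is the same looped path the paper uses, but the paper takes vertex set $\{a_0,\dots,a_{2n}\}$ whereas you take $\{0,\dots,2n+2\}$. Your computation of the sets $T_k=\{i:i\ge 2n+1-2k\}$ and your observation that the ``true region'' widens by exactly two vertices per level make explicit why the top index must be $2n+2$: with the paper's shorter path one in fact already gets $T_n=D$ (for instance, when $n=1$ the three-vertex path satisfies $\forall x\,\theta_1(x)$), so your choice of length is the one that actually separates level $n$ from level $n+1$.
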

\begin{proof}$\hspace{-1.4ex}:$

\noindent
(1): \textsl{By induction on $n$. For $n\!=\!0$, take a directed graph $\langle D;R\rangle$ and assume that $\forall x\,\theta_0(x)$ holds in it. If   a subset $X\!\subseteq\!D$  satisfies  $\forall x(x\!\in\!X\!\leftrightarrow\!\forall y[xRy\!\rightarrow\!y\!\not\in\!X])$, then for any $a\!\in\!D$,  the assumption $a\!\in\!X$ implies that there exists some $b\!\in\!D$ such that $aRb$   and $\forall z[bRz\!\rightarrow\!aRz]$. Now, by $b\!\not\in\!X$, there should exist some $c\!\in\!D$ such that $bRc$ and $c\!\not\in\!X$. Also, $aRc$ should hold, which is a contradiction with $a\!\in\!X$. Thus $X\!=\!\emptyset$. But then for any $a\!\in\!D$ there should exist some $b\!\in\!D$ with $aRb$ and $b\!\in\!X$, and so $X\!\neq\!\emptyset$; another contradiction. Thus, there exists no such  $X\!\subseteq\!D$; whence, $\forall x\,\theta_0(x) \vDash  \mathcal{Y}$.}

\textsl{Now, suppose that $ \forall x\,\theta_n(x) \vDash \mathcal{Y}$ holds. Take a directed graph $\langle D;R\rangle$ and assume that $\forall x\,\theta_{n+1}(x)$ holds in it. If for  some  $X\!\subseteq\!D$ we have  $\forall x(x\!\in\!X\!\leftrightarrow\!\forall y[xRy\!\rightarrow\!y\!\not\in\!X])$, then for any $a\!\in\!D$, there exists some $b\!\in\!D$ such that $aRb$, and we have $aRx$ for any $x$ in the set $D_b\!=\!\{z\!\in\!D\mid bRz\}$. Now, if $D_b\!\neq\!\emptyset$, then the directed graph $\langle D_b,R\!\cap\!D_b^2\rangle$ satisfies $\forall x\,\theta_n(x)$, and so, by the induction hypothesis,
    the set $X\!\cap\!D_b$ cannot exist. So, we necessarily  have $D_b\!=\!\emptyset$. Now, if   $a\!\in\!X$ holds, then we should have that $b\!\not\in\!X$ and so there should exists some $c\!\in\!D_b$ with $c\!\not\in\!X$; a contradiction. Thus,  $X\!=\!\emptyset$. Then, for any $a\!\in\!D$, since $a\!\not\in\!X$, there should exist some $b$ with $aRb$ and $b\!\in\!X$; another  contradiction. This shows that $ \forall x\,\theta_{n+1}(x) \vDash \mathcal{Y}$.}

\noindent
(2):
\textsl{Suppose that  $\forall x\,\theta_n(x)$ holds, and fix an $x$. There is some $y$ such that $xRy$; and for any $z$ with $yRz$ we  have  $\theta_n(z)$ by the assumption $\forall x\,\theta_n(x)$. So, $\forall x\,\theta_{n+1}(x)$ holds.}

\noindent
(3):
\textsl{Consider $\langle D;R\rangle$, with  $D\!=\!\{a_0,a_1,\cdots,a_{2n}\}$ and  $R\!=\!\{(a_i,a_{i+1})\mid 0\!\leqslant\!i\!<\!2n\}\cup\{(a_{2n},a_{2n})\}$.
In the directed graph $\langle D;R\rangle$,  obviously,  $\forall x\,\theta_{n+1}(x)$ holds, but $\forall x\,\theta_{n}(x)$ does not hold, since we have $a_{2n-2}Ra_{2n-1}Ra_{2n}$ but $\neg(a_{2n-2}Ra_{2n})$.}
\end{proof}

As a result, $\mathcal{Y}$ does not imply the sentence  $\forall x\,\theta_n(x)$, for any $n\!\in\!\mathbb{N}$.
In the next section, we show that no first-order sentence in the language of $\langle R\rangle$ is equivalent with the second-order sentence  $\mathcal{Y}$. So, neither the sentence $\mathcal{Y}$ nor its negation  $\neg\mathcal{Y}$ is first-order-izable (see \cite{boolos,boolos1}). Not only the second-order sentence $\mathcal{Y}$ is non-equivalent with any first-order sentence, but also it is non-equivalent with any first-order theory (which could contain infinitely many sentences).
Actually, $\neg\mathcal{Y}$ is equivalent with the existence of a kernel in a directed graph $\langle D;R\rangle $; see e.g. \cite{berge}. So, our result shows that the existence or non-existence of a kernel in a directed graph is not equivalent to any first-order sentence (in the language of directed graphs). Whence, Yablo's paradox, formalized as ($\mathcal{Y}_1$ or as) $\mathcal{Y}$ in Definition~\ref{def:yablo12}, does not turn by itself into (a first-order or) a second-order tautology, and some conditions should be put on $R$ to make it a theorem. This paradox can be nicely translated into some theorems in Linear Temporal Logic (see \cite{ks14,ks17}) or in Modal Logic (see \cite{fg16}).

\section{Non-first-orderizability of Yablo's Paradox}\label{sec:not1}
Consider the language $\langle\mathfrak{s}\rangle$, where  $\mathfrak{s}$ is a unary function symbol. A standard
structure  on this language is $\langle\mathbb{N};\mathfrak{s}\rangle$, where  $\mathfrak{s}$ is interpreted as the successor function: $\mathfrak{s}(n)\!=\!n\!+\!1$ for all natural numbers  $n\!\in\!\mathbb{N}$.

\begin{definition}[Theory of Successor,  and Kernel of a Directed Graph]\label{def:syablo}
{\em Let the theory $\mathcal{S}$ on the language  $\langle \mathfrak{s}\rangle$ consist of the following axiom:
$$\forall x,y\,[\mathfrak{s}(x)\!=\!\mathfrak{s}(y)\rightarrow x\!=\!y].$$
With any structure $\langle M;\mathfrak{s}\rangle$,  the directed graph  $\langle M;R\rangle$ is associated, where $R$ is defined by $xRy \iff y\!=\!\mathfrak{s}(x)$, for all $x,y\!\in\!M$.}

\noindent
{\em For a directed graph $\langle D;R\rangle$, a subset $K\!\subseteq\!D$ is called a {\em Kernel}, when it has the following property:  $\forall x\big(x\!\in\!K\leftrightarrow\forall y[xRy\!\rightarrow\!y\!\not\in\!K]\big)$.}

\noindent
{\em For a formula $\varphi$ over the language $\langle R\rangle$, let
$\varphi^{\mathfrak{s}}$ result from $\varphi$ by replacing each $uRv$ with $\mathfrak{s}(u)\!=\!v$ for variables $u,v$; so, $\varphi^{\mathfrak{s}}$ is a formula over the language $\langle \mathfrak{s}\rangle$. }
\hfill\ding{71}\end{definition}

So, $\neg\mathcal{Y}$ states the existence of a Kernel in a directed graph with relation $R$, and the $\mathfrak{s}$-translation of Yablo's paradox  $\mathcal{Y}^{\mathfrak{s}}$ is equivalent with the second-order sentence  $\neg\exists X^{(1)}\forall x\big[X^{(1)}(x)\longleftrightarrow\neg X^{(1)}\big(\mathfrak{s}(x)\big)\big]$. Any structure $\langle M;\mathfrak{s}\rangle$ which satisfies  $\mathcal{S}$ may contain some copies of $$\mathbb{N}\approx\{a_0,a_1,a_2,\cdots\}$$ with $a_{n+1}\!=\!\mathfrak{s}(a_n)$ for all $n\!\in\!\mathbb{N}$, such that there is no $a\!\in\!M$ with $\mathfrak{s}(a)\!=\!a_0$. It  may also have some copies of $$\mathbb{Z}\approx\{\cdots,a_{-2},a_{-1},a_0,a_1,a_2,\cdots\},$$  in which $a_{m+1}\!=\!\mathfrak{s}(a_m)$ for all $m\!\in\!\mathbb{Z}$. There could be also some finite cycles $$\mathbb{Z}_m\approx\{a,\mathfrak{s}(a),
\mathfrak{s}^2(a),\cdots,\mathfrak{s}^{m-1}(a)\} \textrm{\, with \,} \mathfrak{s}^m(a)\!=\!a,$$
for some $m\!>\!0$. Let us note that, by the axiom $\mathcal{S}$, no two copies  of $\mathbb{N}$ or $\mathbb{Z}$ or a finite cycle can intersect one another.  Indeed, these are all a model $\langle M;\mathfrak{s}\rangle$ of $\mathcal{S}$ can contain.

\begin{lemma}[Axiomatizability of $\neg\mathcal{Y}^{\mathfrak{s}}\!+\!\mathcal{S}$]\label{cor:axioms}
{\em The associated directed graph $\langle D;R\rangle$ of a model $\langle M;\mathfrak{s}\rangle$ of $\mathcal{S}$ satisfies $\neg\mathcal{Y}$, if and only if $\langle M;\mathfrak{s}\rangle$ has no odd cycles, if and only if the structure $\langle M;\mathfrak{s}\rangle$ also satisfies the axioms $\neg\exists x(\mathfrak{s}^{2n+1}(x)\!=\!x)$ for %each
$n\!\in\!\mathbb{N}$.}
\end{lemma}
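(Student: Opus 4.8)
The plan is to use the classification of models of $\mathcal{S}$ recalled immediately before the Lemma: a model $\langle M;\mathfrak{s}\rangle$ of $\mathcal{S}$ is the disjoint union of its $\mathfrak{s}$-components, each of which is (isomorphic to) an $\mathbb{N}$-chain, a $\mathbb{Z}$-chain, or a finite cycle $\mathbb{Z}_m$ with $m\!>\!0$. First I would rewrite $\neg\mathcal{Y}$: in the associated digraph $\langle D;R\rangle$ one has $xRy\iff y\!=\!\mathfrak{s}(x)$, so a set $K\!\subseteq\!M$ is a Kernel precisely when $\forall x\,\big(x\!\in\!K\leftrightarrow\mathfrak{s}(x)\!\notin\!K\big)$; thus $\langle D;R\rangle$ satisfies $\neg\mathcal{Y}$ iff such a $K$ exists.

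Because $\mathfrak{s}$ maps every component into itself, the Kernel condition for an element $x$ only mentions $x$ and $\mathfrak{s}(x)$, which lie in the same component; hence $K$ is a Kernel of $\langle D;R\rangle$ if and only if $K\!\cap\!C$ is a Kernel of $C$ for every component $C$, and therefore a Kernel of $\langle D;R\rangle$ exists iff every component admits one. So the task reduces to a component-by-component check. An $\mathbb{N}$-chain $a_0,a_1,a_2,\cdots$ has the Kernel $\{a_{2i}:i\!\in\!\mathbb{N}\}$, and a $\mathbb{Z}$-chain has the analogous ``even-index'' Kernel, so chains never obstruct. For a cycle $\mathbb{Z}_m=\{a_0,\cdots,a_{m-1}\}$ with $\mathfrak{s}(a_i)=a_{(i+1)\bmod m}$, the Kernel condition reads $a_i\!\in\!K\leftrightarrow a_{(i+1)\bmod m}\!\notin\!K$, which forces membership in $K$ to flip at each step around the cycle; since traversing the whole cycle returns to $a_0$, consistency requires an even number of flips, i.e.\ $m$ even, and when $m$ is even the alternating set $\{a_0,a_2,\cdots,a_{m-2}\}$ is indeed a Kernel. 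Combining these cases, a Kernel exists iff no component is an odd cycle, i.e.\ iff $\langle M;\mathfrak{s}\rangle$ has no odd cycle --- this is the first equivalence.

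For the equivalence with the axiom scheme I would argue by divisibility. If $a\!\in\!M$ satisfies $\mathfrak{s}^{2n+1}(a)\!=\!a$, then the least period $d$ of $a$ divides $2n\!+\!1$ and hence is odd, so $a$ lies on an odd cycle; conversely an odd cycle of length $d$ contains a point $a$ with $\mathfrak{s}^{d}(a)\!=\!a$, and $d\!=\!2n\!+\!1$ for $n\!=\!(d\!-\!1)/2$. Therefore ``$\langle M;\mathfrak{s}\rangle$ has an odd cycle'' is equivalent to ``$\exists n\,\exists x\,(\mathfrak{s}^{2n+1}(x)\!=\!x)$'', i.e.\ to the failure of the axiom scheme $\{\neg\exists x\,(\mathfrak{s}^{2n+1}(x)\!=\!x)\}_{n\in\mathbb{N}}$; combined with the previous paragraph this yields the remaining equivalence.

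The only genuinely non-routine point is the cycle case: one must verify both that the Kernel condition on $\mathbb{Z}_m$ really forces the alternating pattern (so that odd $m$ is impossible) and that for even $m$ this pattern actually is a Kernel --- a small but essential finite-combinatorial argument. Everything else (the decomposition into $\mathfrak{s}$-components, the two chain cases, and the divisibility bookkeeping for the axiom scheme) is straightforward given the discussion preceding the Lemma.
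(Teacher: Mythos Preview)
Your proposal is correct and follows essentially the same approach as the paper: construct a Kernel from the ``even-indexed'' elements in each $\mathbb{N}$-, $\mathbb{Z}$-, or even-cycle component, and show that on an odd cycle the forced alternation of membership is inconsistent. Your component-by-component organization and the explicit divisibility argument for the axiom-scheme equivalence are slightly more detailed than the paper (which calls the second equivalence ``straightforward''), but the underlying ideas coincide.
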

\begin{proof}$\hspace{-1.4ex}:$

\noindent
\textsl{The second equivalence is straightforward; so, we prove the first equivalence only.}

\noindent
\textsl{First, suppose that $\langle M;\mathfrak{s}\rangle$ has no odd cycles. Then let $K\subseteq M$ consist of the even natural and integer numbers (as the copies of $\mathbb{N}$ and $\mathbb{Z}$) of $M$ (if any), and the elements with even indices in the finite cycles of $M$; i.e., for a finite even cycle $\{a,\mathfrak{s}(a),
\mathfrak{s}^2(a),\cdots,\mathfrak{s}^{2m+2}(a)\!=\!a\}$ take $\{a,\mathfrak{s}^2(a),\mathfrak{s}^4(a),\cdots, \mathfrak{s}^{2m}(a)\}$. Then the set $K$ is a kernel of $\langle M;R\rangle$, since  an element of $M$ is in $K$, if and only if it is even indexed, if and only if its successor is odd indexed, if and only if its successor is not in $K$.
Thus, $\langle M;R\rangle$ satisfies $\neg\mathcal{Y}$. This would have not been possible if there were an odd cycle; i.e., an element $\alpha$ such that $\mathfrak{s}^{2m+1}(\alpha)\!=\!\alpha$ for some $m\!>\!0$, since $\alpha$ would had been odd and even indexed at the same time.}

\noindent
\textsl{Second, suppose that
the directed graph
$\langle M;R\rangle$ associated to
$\langle M;\mathfrak{s}\rangle$ has a kernel $K$,  and also  (for the sake of a contradiction) that $\langle M;\mathfrak{s}\rangle$ has an odd  cycle such as  $\{a,\mathfrak{s}(a),\mathfrak{s}^2(a),\cdots,
\mathfrak{s}^{2m+1}(a)\!=\!a\}$,
for some $m\!>\!0$. Then, if $a\!\in\!K$, then $\mathfrak{s}(a)\!\not\in\!K$, then $\cdots$ $\mathfrak{s}^{2m}(a)\!\in\!K$, and so $a\!=\!\mathfrak{s}^{2m+1}(a)\!\not\in\!K$, a contradiction. Also, if $a\!\not\in\!K$, then $\mathfrak{s}(a)\!\in\!K$, then $\cdots$ $\mathfrak{s}^{2m}(a)\!\not\in\!K$, and so $a\!=\!\mathfrak{s}^{2m+1}(a)\!\in\!K$, a contradiction again. Therefore, if $\langle M;R\rangle$ has a kernel, then $\langle M;\mathfrak{s}\rangle$ can have no odd cycle.}
\end{proof}

\begin{theorem}[Non-First-Order-izability of $\mathcal{Y}$ and $\neg\mathcal{Y}$]\label{thm:nfo}
{\em The second-order sentence $\mathcal{Y}$ is not equivalent with any first-order sentence.}
\end{theorem}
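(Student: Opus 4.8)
The plan is to argue by contradiction, reducing to the successor-translation set up in this section. Suppose, for contradiction, that $\psi$ is a first-order sentence over the language $\langle R\rangle$ that is equivalent with $\mathcal{Y}$. First I would record that the translation $\varphi\mapsto\varphi^{\mathfrak{s}}$ is faithful on the associated structures: for every $\langle M;\mathfrak{s}\rangle$ and its associated directed graph $\langle M;R\rangle$ (where $xRy\iff y=\mathfrak{s}(x)$), a trivial induction on $\varphi$ gives $\langle M;R\rangle\vDash\varphi$ iff $\langle M;\mathfrak{s}\rangle\vDash\varphi^{\mathfrak{s}}$, since the only atoms $uRv$ are replaced by the by-definition-equivalent $\mathfrak{s}(u)=v$. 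Combining this with the hypothesis $\mathcal{Y}\equiv\psi$ and with Lemma~\ref{cor:axioms}, we obtain: for every model $\langle M;\mathfrak{s}\rangle$ of $\mathcal{S}$, one has $\langle M;\mathfrak{s}\rangle\vDash\psi^{\mathfrak{s}}$ iff its associated graph satisfies $\mathcal{Y}$ iff $\langle M;\mathfrak{s}\rangle$ has an odd cycle.

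Next I would apply the compactness theorem to the $\langle\mathfrak{s}\rangle$-theory
$$\Gamma \;=\; \mathcal{S}\;\cup\;\{\psi^{\mathfrak{s}}\}\;\cup\;\{\,\neg\exists x\,(\mathfrak{s}^{2n+1}(x)=x) \;:\; n\in\mathbb{N}\,\}.$$
To see that $\Gamma$ is finitely satisfiable, take a finite $\Gamma_0\subseteq\Gamma$; it mentions the non-cycle axioms only for $n$ in some finite set $F$. Choose $N$ with $2N+1>2n+1$ for every $n\in F$, and take the single finite cycle $\langle\mathbb{Z}_{2N+1};\mathfrak{s}\rangle$. This structure satisfies $\mathcal{S}$ (its successor map is a bijection), and it has an odd cycle, so its associated graph has no kernel, i.e. satisfies $\mathcal{Y}$, hence $\psi$, hence $\langle\mathbb{Z}_{2N+1};\mathfrak{s}\rangle\vDash\psi^{\mathfrak{s}}$. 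Moreover its only cycle has length $2N+1$, which divides no $2n+1$ with $n\in F$ (as $0<2n+1<2N+1$), so it satisfies each $\neg\exists x\,(\mathfrak{s}^{2n+1}(x)=x)$ for $n\in F$; thus $\langle\mathbb{Z}_{2N+1};\mathfrak{s}\rangle\vDash\Gamma_0$. By compactness, $\Gamma$ then has a model $\langle\mathcal{M};\mathfrak{s}\rangle$.

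Finally, $\langle\mathcal{M};\mathfrak{s}\rangle$ is a model of $\mathcal{S}$ with no odd cycles, so by Lemma~\ref{cor:axioms} its associated graph has a kernel, i.e. satisfies $\neg\mathcal{Y}$, hence refutes $\psi$, so $\langle\mathcal{M};\mathfrak{s}\rangle\vDash\neg\psi^{\mathfrak{s}}$ — contradicting $\psi^{\mathfrak{s}}\in\Gamma$; hence no such $\psi$ exists. The only step needing real care is finite satisfiability, specifically the observation that a lone odd cycle is a model of $\mathcal{Y}$ (it has no kernel) and that one can steer $N$ clear of the finitely many forbidden cycle-lengths; everything else is routine bookkeeping with the translation. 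I would also remark that the very same argument, with $\psi$ replaced by an arbitrary first-order theory $T$ over $\langle R\rangle$ and $\psi^{\mathfrak{s}}$ by $T^{\mathfrak{s}}$, shows that $\mathcal{Y}$ is not equivalent with any first-order theory: a finite fragment of $T^{\mathfrak{s}}\cup\mathcal{S}\cup\{\neg\exists x\,(\mathfrak{s}^{2n+1}(x)=x)\}_{n}$ uses only finitely many axioms of $T$, all of which hold in $\langle\mathbb{Z}_{2N+1};\mathfrak{s}\rangle$ because that structure's associated graph models $\mathcal{Y}$ and hence $T$.
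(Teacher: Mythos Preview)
Your proof is correct and follows essentially the same route as the paper: both reduce to the successor structures via Lemma~\ref{cor:axioms}, and both use a sufficiently large odd cycle $\langle\mathbb{Z}_{2N+1};\mathfrak{s}\rangle$ as the witnessing structure. The only difference is packaging: the paper observes that a first-order $\psi$ equivalent to $\mathcal{Y}$ would make $\mathcal{S}'=\mathcal{S}\cup\{\neg\exists x(\mathfrak{s}^{2n+1}(x)=x)\mid n\in\mathbb{N}\}$ finitely axiomatizable (by $\mathcal{S}\cup\{\neg\psi^{\mathfrak{s}}\}$), then invokes a textbook compactness consequence to conclude that some finite subset of $\mathcal{S}'$ already axiomatizes it, and refutes this with the large odd cycle; you instead run compactness directly on $\Gamma=\mathcal{S}\cup\{\psi^{\mathfrak{s}}\}\cup\{\neg\exists x(\mathfrak{s}^{2n+1}(x)=x)\}_{n}$ and derive the contradiction in the limit model. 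These are two presentations of the same argument, and your explicit handling of the translation $\varphi\mapsto\varphi^{\mathfrak{s}}$ and of the divisibility check in $\mathbb{Z}_{2N+1}$ is a welcome bit of care. Your closing remark that the same compactness argument handles an arbitrary first-order theory $T$ in place of a single $\psi$ is also correct, and matches what the paper proves separately as Theorem~\ref{thm:notheory}.
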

\begin{proof}$\hspace{-1.4ex}:$

\noindent
\textsl{If there were a first-order sentence in the language $\langle R\rangle$ equivalent to $\mathcal{Y}$, then by Lemma~\ref{cor:axioms},  $\mathcal{S}'\!=\!\mathcal{S}\!\cup\!\{\neg\exists x(\mathfrak{s}^{2n+1}(x)\!=\!x)\mid n\!\in\!\mathbb{N}\}$ would be finitely axiomatizable (see \cite[Lemma 4.2.9]{dalen}). But this is not true, since for any finite subset of this theory, there exists a structure  that satisfies that finite sub-theory but is not a model of the whole  theory $\mathcal{S}'$: it suffices to take a sufficiently large odd cycle.}
\end{proof}

Thus, there can exist no  first-order sentence $\eta$ such that the second-order sentence $\eta\!\leftrightarrow\!\mathcal{Y}$  is a logical tautology. As a result, the proposed  formalization $\mathcal{Y}$ of  Yablo's paradox in Definition~\ref{def:yablo12}, being second-order in nature, is not (equivalent with any) first-order (sentence). We end the paper with a stronger result: there  cannot exist any  first-order theory that is equivalent with $\mathcal{Y}$. So, Yablo's paradox is not even infinitely first-order (i.e., it is non-equivalent even with any theory that consists of an infinite set of first-order sentences).

\begin{theorem}[Non-Equivalence of $\mathcal{Y}$ With First-Order Theories]\label{thm:notheory}
{\em The second-order sentence $\mathcal{Y}$ is not equivalent with any first-order theory.}
\end{theorem}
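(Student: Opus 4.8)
The plan is to leverage the same obstruction that powered Theorem~\ref{thm:nfo}, but upgraded to rule out \emph{infinite} first-order axiomatizations by invoking compactness directly rather than the finite-axiomatizability criterion. Suppose, for contradiction, that there were a first-order theory $T$ in the language $\langle R\rangle$ with $T\vDash\mathcal{Y}$ and $\mathcal{Y}\vDash T$ (i.e. $T$ and $\mathcal{Y}$ have the same models among all directed graphs). Applying the $\mathfrak{s}$-translation as in Definition~\ref{def:syablo}, and reasoning inside models of $\mathcal{S}$, Lemma~\ref{cor:axioms} tells us that over $\mathcal{S}$ the sentence $\neg\mathcal{Y}$ is equivalent to the (infinite) set of acyclicity-in-odd-length axioms $\Omega=\{\neg\exists x(\mathfrak{s}^{2n+1}(x)=x)\mid n\in\mathbb{N}\}$. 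Hence $\mathcal{S}\cup\{\neg\eta\mid\eta\in T^{\mathfrak{s}}\}$ (equivalently, the theory asserting $\mathcal{S}$ together with $\mathcal{Y}^{\mathfrak{s}}$ being \emph{false}) would axiomatize, over $\mathcal{S}$, the same class of structures as $\mathcal{S}\cup\{\exists x(\mathfrak{s}^{2n+1}(x)=x)\text{ for some }n\}$ — that is, the structures containing at least one odd cycle.

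The key step is then a compactness argument showing that ``having an odd cycle'' is not first-order axiomatizable over $\mathcal{S}$, even by an infinite theory, and moreover that its complement ``having no odd cycle'' (which is what $T^{\mathfrak{s}}$, suitably combined with $\mathcal{S}$, would have to capture) is not finitely cut out in a way compatible with an arbitrary first-order $T$. Concretely: the class of models of $\mathcal{S}$ with no odd cycle is closed under ultraproducts (it is axiomatized by $\mathcal{S}\cup\Omega$), but the class of models \emph{with} an odd cycle is \emph{not} closed under ultraproducts — take $\langle\mathbb{Z}_{2n+1};\mathfrak{s}\rangle$ for each $n$, each of which has an odd cycle, and observe that a non-principal ultraproduct $\prod_{n}\mathbb{Z}_{2n+1}/\mathcal{U}$ satisfies $\mathcal{S}$ yet has no odd cycle at all (for each fixed $k$, $\neg\exists x(\mathfrak{s}^{2k+1}(x)=x)$ holds in all but finitely many factors). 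Since every first-order-axiomatizable class is closed under ultraproducts, no first-order theory can have exactly the odd-cycle-containing models of $\mathcal{S}$ as its models-over-$\mathcal{S}$; pulling this back through the $\mathfrak{s}$-translation and the equivalence $\mathcal{Y}\equiv T$, we reach a contradiction.

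One technical wrinkle to dispatch carefully: the equivalence $\mathcal{Y}\equiv T$ is asserted over \emph{all} directed graphs $\langle D;R\rangle$, whereas the argument above lives among the associated graphs of models of $\mathcal{S}$, which are exactly the graphs in which $R$ is the graph of an injective partial... no — of an injective \emph{total} function. So I must check that restricting to this subclass is legitimate: if $T$ is equivalent to $\mathcal{Y}$ over all graphs, then in particular $T^{\mathfrak{s}}\cup\mathcal{S}$ has exactly the models of $\mathcal{Y}^{\mathfrak{s}}\cup\mathcal{S}$, and by Lemma~\ref{cor:axioms} the models of $\neg\mathcal{Y}^{\mathfrak{s}}\cup\mathcal{S}$ are precisely the odd-cycle-free models of $\mathcal{S}$; hence $\mathcal{S}\cup T^{\mathfrak{s}}$ axiomatizes the class of models of $\mathcal{S}$ that \emph{do} contain an odd cycle (since $\mathcal{Y}^{\mathfrak s}$ holds iff there is an odd cycle). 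That class fails to be closed under ultraproducts by the witnesses above, contradicting the fact that $\mathcal{S}\cup T^{\mathfrak{s}}$ is a first-order theory.

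The main obstacle I anticipate is purely bookkeeping: making sure the translation $(\cdot)^{\mathfrak{s}}$ and the back-and-forth between ``$\langle D;R\rangle$ is the associated graph of some model of $\mathcal{S}$'' and ``$R$ is a first-order-definable injective function graph'' is handled cleanly, so that an equivalence of $\mathcal{Y}$ with $T$ over \emph{all} directed graphs really does descend to the sub-class where Lemma~\ref{cor:axioms} applies. Once that is in place, the ultraproduct (compactness) computation on the finite odd cycles $\mathbb{Z}_{2n+1}$ is entirely routine, and in fact mirrors the ``sufficiently large odd cycle'' device already used in the proof of Theorem~\ref{thm:nfo} — the only upgrade is replacing ``finite subtheory $\Rightarrow$ model'' with ``ultraproduct of models $\Rightarrow$ model'', which is exactly what is needed to defeat infinite theories as well.
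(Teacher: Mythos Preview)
Your core argument is correct, but it takes a different route from the paper's. The paper's proof is a two-line reduction: since $\neg\mathcal{Y}^{\mathfrak{s}}+\mathcal{S}$ is axiomatizable (Lemma~\ref{cor:axioms}), if $\mathcal{Y}^{\mathfrak{s}}$ were also axiomatizable then by a standard compactness lemma (van Dalen, Lemma~4.2.10: if a class and its complement are both elementary, both are finitely axiomatizable) $\neg\mathcal{Y}^{\mathfrak{s}}+\mathcal{S}$ would be \emph{finitely} axiomatizable, contradicting Theorem~\ref{thm:nfo}. You instead bypass both that lemma and Theorem~\ref{thm:nfo}, arguing directly via \L o\'s: $\mathcal{S}\cup T^{\mathfrak{s}}$ would axiomatize exactly the models of $\mathcal{S}$ containing an odd cycle, but a non-principal ultraproduct of the $\mathbb{Z}_{2n+1}$'s lies in this class by \L o\'s while having no odd cycle. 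Your approach is more self-contained (it does not lean on Theorem~\ref{thm:nfo} or an external lemma) and makes the obstruction explicit; the paper's is shorter and modular. Both are really the same compactness phenomenon viewed from different angles.

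One genuine slip to fix in your write-up: the expression $\mathcal{S}\cup\{\neg\eta\mid\eta\in T^{\mathfrak{s}}\}$ in your first paragraph does \emph{not} assert that $T^{\mathfrak{s}}$ fails --- negating each axiom is not the negation of the theory --- so that line is simply wrong. You silently correct this in your third paragraph by working with $\mathcal{S}\cup T^{\mathfrak{s}}$ itself (axiomatizing the odd-cycle class) and applying the ultraproduct on that side, which is the right formulation; just delete the misleading first attempt.
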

\begin{proof}$\hspace{-1.4ex}:$

\noindent
\textsl{By Lemma~\ref{cor:axioms}, the theory $\neg\mathcal{Y}^{\mathfrak{s}}+\mathcal{S}$ is axiomatizable over $\langle\mathfrak{s}\rangle$; if $\mathcal{Y}^{\mathfrak{s}}$ were axiomatizable, then by \cite[Lemma~4.2.10]{dalen} the theory $\neg\mathcal{Y}^{\mathfrak{s}}\!+\!\mathcal{S}$ would be finitely axiomatizable. But this contradicts Theorem~\ref{thm:nfo}. So, $\mathcal{Y}^{\mathfrak{s}}$ is not axiomatizable, hence $\mathcal{Y}$ is not equivalent with any first-order theory.}
\end{proof}

We conjecture that the second-order sentence $\neg\mathcal{Y}$, too, is non-equivalent with all the  first-order theories. This does not concern the main topic of this article, since the sentence $\neg\mathcal{Y}$ does not express Yablo's paradox, while  $\mathcal{Y}$ does that.

\subsubsection*{Acknowledgements} This research was partially  supported by a grant from  $\mathbb{IPM}$ ($\mathcal{N}^{\underline{\rm o}}$~$95030033$).  I warmly thank Kaave Lajevardi for drawing my attention to the Shakespearean epigram, that is quoted in this paper.

% Non-BibTeX users please use

\end{paper}
\end{document}